\newtheorem{theorem}{Theorem}[section]
\newtheorem{lemma}[theorem]{Lemma}
\newtheorem{remark}[theorem]{Remark}
\newtheorem{assumption}[theorem]{Assumption}
\newcommand{\removelatexerror}{\let\@latex@error\@gobble}
\newcommand{\argmin}{\operatorname{argmin}}
\newcommand{\eps}{\epsilon}
\newcommand{\real}{\mathbb{R}}
\newcommand{\realextended}{\overline{\real}}
\newcommand{\naturalnumbers}{\mathbb{N}}
\newcommand{\norm}[1]{\ensuremath{\| #1 \|}}
\newcommand{\map}[3]{#1:#2 \rightarrow #3}
\newcommand{\setdef}[2]{\{#1 \; | \; #2\}}
\newcommand{\setdefb}[2]{\Bigl\{#1 \; \Big| \; #2\Bigr\}}
\newcommand{\MM}{\mathcal{M}}
\newcommand{\abs}[1]{\ensuremath{\left\lvert{#1}\right\rvert}}
\newcommand{\cl}{\operatorname{cl}}
\newcommand{\setr}[1]{\{#1\}}
\newcommand{\FF}{\mathcal{F}}
\newcommand{\xb}{\bar{x}}
\newcommand{\tb}{\bar{t}}
\newcommand{\st}{\operatorname{s.} \operatorname{t.}} 
\newcommand{\Eb}{\mathbb{E}}
\newcommand{\Pb}{\mathbb{P}}
\newcommand{\Rb}{\mathbb{R}}
\newcommand{\Qb}{\mathbb{Q}}
\newcommand{\Nb}{\mathbb{N}}
\newcommand{\BB}{\mathcal{B}}
\newcommand{\HH}{\mathcal{H}}
\newcommand{\YY}{\mathcal{Y}}
\newcommand{\TT}{\mathcal{T}}
\newcommand{\PP}{\mathcal{P}}
\newcommand{\WW}{\mathcal{W}}
\renewcommand{\SS}{\mathcal{S}}
\newcommand{\CVaR}[1]{\operatorname{CVaR}^{#1}}
\newcommand{\VaR}[1]{\operatorname{VaR}^{#1}}
\newcommand{\Pbhat}{\widehat{\Pb}}
\newcommand{\data}{\widehat{\xi}}
\newcommand{\Jrcp}{\mathsf{J}^{\mathtt{RCP}}} 
\newcommand{\Jdrrcp}{\mathsf{J}^{\mathtt{DRRCP}}}
\newcommand{\xdrrcp}{x^{\mathtt{DRRCP}}}
\newcommand{\FFrcp}{\FF^{\mathtt{RCP}}}
\newcommand{\FFdrrcp}{\FF^{\mathtt{DRRCP}}}
\newcommand{\FFrcpo}{\FF^{\mathtt{RCP*}}}
\newcommand{\FFdrrcpo}{\FF^{\mathtt{DRRCP*}}}
\newcommand{\dist}{\operatorname{dist}}
\newcommand{\vccp}{v^{\mathtt{CCP}}}
\newcommand{\vdrccp}{\widehat{v}^{\mathtt{DRCCP}}_N}
\newcommand{\xdrccp}{x^{\mathtt{DRCCP}}}
\newcommand{\Jccp}{\mathsf{J}^{\mathtt{CCP}}} 
\newcommand{\Jdrccp}{\mathsf{J}^{\mathtt{DRCCP}}}
\newcommand{\oprocendsymbol}{\hbox{$\bullet$}}
\newcommand{\oprocend}{\relax\ifmmode\else\unskip\hfill\fi\oprocendsymbol}
\newcommand{\longthmtitle}[1]{\mbox{}\textup{\textsl{(#1):}}}
\title{Consistency of Distributionally Robust Risk- and Chance-Constrained Optimization under Wasserstein Ambiguity Sets}
\author{Ashish Cherukuri and Ashish R. Hota
\thanks{A. Cherukuri and A. R. Hota are respectively affiliated with the Engineering and Technology Institute Groningen, University of Groningen, The Netherlands (\texttt{a.k.cherukuri@rug.nl}) and the Department of Electrical Engineering, Indian Institute of Technology, Kharagpur, India (\texttt{ahota@ee.iitkgp.ac.in}). This work is supported in part by a grant from IIT Kharagpur under the ISIRD scheme.}}%
\begin{document}

\maketitle
\thispagestyle{empty}

\begin{abstract}
We study stochastic optimization problems with chance and risk constraints, where in the latter, risk is quantified in terms of the conditional value-at-risk (CVaR). We consider the distributionally robust versions of these problems, where the constraints are required to hold for a family of distributions constructed from the observed realizations of the uncertainty via the Wasserstein distance. Our main results establish that if the samples are drawn independently from an underlying distribution and the problems satisfy suitable technical assumptions, then the optimal value and optimizers of the distributionally robust versions of these problems converge to the respective quantities of the original problems, as the sample size increases.
\end{abstract}

\section{Introduction}

Optimization problems under uncertain constraints are pervasive in engineering applications. In the paradigm of {\it chance-constrained programs} (CCPs), uncertain parameters are treated as random variables and the uncertain constraints are required to be satisfied with a high probability. However, the feasibility set of a CCP is in general non-convex \cite{nemirovski2006convex}. Furthermore, although the probability of constraint violation is required to be small, the magnitude of constraint violation could potentially be unbounded which is not desirable. 

Consequently, recent approaches model uncertain constraints via coherent risk measures that preserve analytical tractability; specifically the conditional value-at-risk (CVaR) \cite{rockafellar2000optimization,artzner1999coherent}. In contrast with chance constraints, (i) CVaR preserves the convexity of the feasibility set, (ii) it requires the magnitude of constraint violation to be bounded in expectation (to be made more precise in Section \ref{subsec:cc-approx}), and (iii) CVaR constraints provide a convex inner approximation of chance constraints \cite{nemirovski2006convex}. Accordingly, CVaR-constrained programs (referred to as {\it risk-constrained programs (RCPs)}) have seen widespread applications in financial engineering \cite{krokhmal2002portfolio}, stochastic optimal control \cite{van2016distributionally,PS-MS-PP:19-ecc, singh2018framework}, safety-critical control applications \cite{samuelson2018safety}, robotics \cite{hakobyan2019risk} and energy systems \cite{summers2015stochastic}.

In order to solve stochastic optimization problems in general and CCPs and RCPs in particular, the decision maker needs to know the probability distribution of uncertain parameters. In practice, this information is often unavailable and instead, the decision maker has access to data about the uncertainty in the form of samples. Accordingly, recent work has focused on constructing a family of probability distributions or an {\it ambiguity set} from the observed samples followed by solving the uncertain optimization problem in a worst-case sense for all distributions in the ambiguity set. This approach is referred to as {\it distributionally robust} optimization. Within this paradigm, ambiguity sets defined via the Wasserstein distance (see Section \ref{sec:prelims} for the definition) have been shown to have desirable out-of-sample performance and analytical tractability \cite{gao2016wasserstein,esfahani2018data,hota2018data}. Motivated by these attractive features, several recent works have proposed approximations and finite-dimensional reformulations of Wasserstein distributionally robust chance and CVaR constrained programs \cite{hota2018data,xie2018drccp,kuhn2018drccp,fatma2020drccp}. This class of problems have also been studied in the context of statistical learning \cite{shafieezadeh2019regularization}, data-driven control \cite{yang2017convex,coulson2020distributionally}, and optimal power flow \cite{poolla2020wasserstein}, among others.

Note that the Wasserstein ambiguity set is defined directly in terms of the available samples that are drawn from an underlying data-generating distribution. Consequently, the distributionally robust problem instance is a random instance of the original CCP (or RCP) defined in terms of the underlying distribution. Therefore, in addition to analytical tractability and finite sample guarantees, it is desirable to analyze how well the optimal solution of the (random) distributionally robust program approximates the optimal solution of the original CCP (or RCP); particularly in the regime when the number of samples grows to infinity. This property is termed as {\it asymptotic consistency} in stochastic programming. While {\it asymptotic consistency} has been established for Wasserstein distributionally robust optimization problems \cite{esfahani2018data}, analogous results for chance- and risk-constrained programs have not been explored in the prior work.

In this paper, we show under suitable assumptions that if the samples are being drawn from an underlying distribution $\Pb$, then the optimal solution and optimizers of the distributionally robust CCP or RCP converge to the corresponding quantities of the CCP or RCP (defined with respect to $\Pb$), as the number of samples increases and the size of the ambiguity set shrinks. We show that the convergence of the optimal values is from above if the rate at which the ambiguity set shrinks is chosen carefully. Our results provide the much needed asymptotic theoretical justification for Wasserstein distributionally robust constrained optimization programs. 

\noindent {\bf Notation:} The sets of real, positive real, non-negative real, and natural numbers are denoted by $\Rb$, $\Rb_{>0}$, $\Rb_{\ge 0}$, and $\naturalnumbers$, respectively. The extended reals are $\realextended = \real \cup \{+ \infty, - \infty \}$. For $N \in \naturalnumbers$, we let $[N] := \{1,2,\ldots,N\}$. For brevity, we denote $\max(x,0)$ by $x_+$. The closure of a set $\SS$ is denoted by $\cl(\SS)$. For a set $\SS$ and $N \in \naturalnumbers$, we denote the $N$-fold cartesian product as $\SS^N := \Pi_{i=1}^N \SS$. Similar notation holds for the $N$-fold product of any probability distribution.

\section{Technical Preliminaries}\label{sec:prelims}

Here we formally define the notion of CVaR, Wasserstein distance, and data-driven ambiguity sets. 

\subsubsection{(Conditional) Value-at-Risk}\label{subsec:cc-approx}

Let $Y$ be a (real-valued) random variable with distribution $\Pb$. For a tolerance level $\alpha \in (0,1)$, the value-at-risk (VaR) of $Y$ at level $\alpha$ is  
\begin{equation}\label{eq:def_var}
\VaR{\Pb}_\alpha(Y) := \inf \setdef{y \in \Rb}{\Pb(Y \leq y) \geq 1-\alpha}.
\end{equation}
That is, it is the $(1-\alpha)$-quantile of the distribution of $Y$. The conditional value-at-risk (CVaR) of $Y$ at level $\alpha$ is 
\begin{align}
\CVaR{\Pb}_{\alpha}(Y):= \inf_{t \in \Rb} \, \{
\alpha^{-1} \Eb_{\Pb}[(Y+t)_+] - t\} . \label{eq:CVaR-def}
\end{align}
If $Y$ has a continuous distribution, then $\CVaR{\Pb}_{\alpha}(Y) = \Eb_{\Pb}[Y | Y \geq \VaR{\Pb}_\alpha(Y)]$, i.e., it is the conditional expectation of $Y$ given that $Y$ exceeds $\VaR{\Pb}_\alpha(Y)$. 

\subsubsection{Wasserstein ambiguity sets}\label{subsec:wasserstein}

Assume  $\Xi \subseteq \Rb^m$ and $d$ to be a complete metric on $\Xi$. Let $\BB({\Xi})$ and $\PP(\Xi)$ be the Borel $\sigma$-algebra and the set of Borel probability measures on $\Xi$, resp. Let $\PP_1(\Xi) \subseteq \PP(\Xi)$ be the set of measures with finite first moment. Following \cite{esfahani2018data}, the $1$-Wasserstein distance between any two measures $\mu, \nu \in \PP_1(\Xi)$ is
\begin{equation}\label{eq:def_wasserstein}
W_1(\mu,\nu) := \min_{\gamma \in \HH(\mu,\nu)}
\left\{\int_{\Xi \times \Xi} d(\xi,\omega) \gamma(d\xi,d\omega) \right\},
\end{equation}
where $\HH(\mu,\nu)$ is the set of all distributions on $\Xi \times \Xi$ with marginals $\mu$ and $\nu$. The minimum in \eqref{eq:def_wasserstein} is attained because the metric $d$ is continuous \cite{gao2016wasserstein}.  
 
We consider ambiguity sets containing distributions close to the empirical distribution induced by the observed samples. Specifically, let $\Pbhat_N := \frac{1}{N}\sum^N_{i=1} \delta_{\data_i}$ be the empirical distribution constructed from samples $\{\data_i\}_{i \in [N]}$, where $\delta_{\data_i}$ is the unit point mass at $\data_i$. We define the data-driven Wasserstein ambiguity set as
\begin{equation}\label{eq:wasserstein-set}
\MM^\theta_N := \setdef{ \mu \in \PP_1(\Xi)}{ W_1(\mu,\Pbhat_N) \leq
	\theta},
\end{equation}
which contains all distributions with finite first moment that are within a distance $\theta \geq 0$ of $\Pbhat_N$. In~\cite{pichler2017quantitative}, it was shown that $\mathcal{M}^\theta_N$ is a weakly-compact subset of $\PP_1(\Xi)$.

\section{Distributionally robust risk-constrained programs and their consistency}\label{sec:drrcp}
In this section, we introduce risk-constrained programs and their distributionally robust counterparts. We consider ambiguity sets defined by the Wasserstein metric and the empirical distribution as discussed above. Our main result establishes that as the number of samples increases, the optimizers and the optimal value of the distributionally robust problems converge, in an appropriate sense, to the corresponding quantities of the original (with respect to the true data-generating distribution) risk-constrained problem. Throughout we consider $\Xi \subseteq \real^m$ and $d$ to be a complete metric. A canonical CVaR or {\it risk-constrained program} (RCP) is of the form
\begin{equation}\label{eq:def_rcp}
\begin{aligned}
\underset{x\in X}{\min} & \quad c^\intercal x
\\
\st  & \quad  \CVaR{\Pb}_{\alpha}(F(x,\xi)) \leq 0, 
\end{aligned}
\end{equation}
where $X \subseteq \Rb^n$ is a closed convex set (potentially defined via deterministic constraints), $c \in \Rb^n$, $\alpha \in (0,1)$, $\Pb \in \PP(\Xi)$ is the distribution of the uncertain parameter $\xi$ (see Section~\ref{subsec:wasserstein} for notation), and $\map{F}{\Rb^n \times \Xi}{\Rb}$ is called the {\it constraint function}. Using~\eqref{eq:CVaR-def}, we can equivalently write the RCP as
\begin{equation}\label{eq:def_equiv_rcp}
\begin{aligned}
\underset{x\in X, t \in \Rb}{\min} & \quad c^\intercal x
\\
\st  & \quad   \Eb_{\Pb}[(F(x,\xi)+t)_+] - t \alpha \leq 0.
\end{aligned}
\end{equation}	 
By equivalent, we mean that $x$ is a feasible point for~\eqref{eq:def_rcp} if and only if there exists $t$ such that $(x,t)$ is feasible for~\eqref{eq:def_equiv_rcp}.

The distributionally robust version of the RCP~\eqref{eq:def_rcp}, which we term as the {\it distributionally robust risk-constrained program} (DRRCP), is given by
\begin{equation}\label{eq:cvar-drccp}
\begin{aligned}
\underset{x \in X}{\min} & \quad c^\intercal x
\\
\st & \quad \sup_{\Qb \in \MM_N^{\theta}} \inf_{t \in \real} \Eb_{\Qb} [(F(x,\xi)+t)_+ - t\alpha] \le 0,
\end{aligned}
\end{equation}
where $\MM_N^{\theta}$ is the data-driven Wasserstein ambiguity set defined in \eqref{eq:wasserstein-set}. In other words, we require the CVaR constraint to hold for all distributions that are within a distance $\theta \geq 0$ from the empirical distribution $\widehat{\Pb}_N := \frac{1}{N} \sum^N_{i=1} \delta_{\widehat{\xi}_i}$ induced by the samples $\{\widehat{\xi}_i\}_{i \in [N]}$, drawn independently from $\Pb$. This problem is of interest when the decision-maker does not know the distribution $\Pb$ of the uncertain parameters and instead has access to samples. Thus, the optimal solution of \eqref{eq:cvar-drccp} is robust with respect to a family of distributions that are likely to have given rise to the observed samples. 

We now present a set of general assumptions.  
\begin{assumption}\longthmtitle{General assumptions on DRRCP}\label{ass:main1}
	The following hold: 
	\begin{enumerate}
		\item the function $F: \Rb^n \times \Xi \to \Rb$ is continuous,  \label{ass:m-1}
		\item for every $\xi \in \Xi$, $x \mapsto F(x,\xi)$ is convex on $X$, \label{ass:m-2}
		\item for every $x \in X$, $\xi \mapsto F(x,\xi)$ is bounded on $\Xi$, and \label{ass:m-3}
		\item $F$ is uniformly Lipschitz over the set $X$, that is, there exists $L > 0$ such that
		\begin{align*}
		\abs{F(x,\xi) - F(x,\xi')} \le L \norm{\xi - \xi'},
		\end{align*}
		for all $\xi, \xi' \in \Xi$ and all $x \in X$. \label{ass:m-4}
	\end{enumerate}
\end{assumption}

We first reformulate~\eqref{eq:cvar-drccp} into a form similar to~\eqref{eq:def_equiv_rcp}. The below result shows that the $\inf$ and the $\sup$ operators in the constraint defining DRRCP~\eqref{eq:cvar-drccp} can be interchanged. The proof is an application of the min-max theorem due to \cite{shapiro2002minimax} stated as Theorem \ref{thm:min-max-shapiro} in the appendix. The results hold under continuity, convexity, and boundedness conditions in Assumption \ref{ass:main1} and the proof is presented in the appendix.

\begin{lemma}\longthmtitle{Min-max equality for the constraint
		function}\label{lemma:drccp_minmax}
	Suppose Assumption~\ref{ass:main1}~\ref{ass:m-1}-\ref{ass:m-3} hold. Then, for every $x \in X$, we have
	\begin{align}
	\underset{\Qb \in \MM^\theta_N}{\sup} \,  \underset{t \in \Rb}{\inf} & \,
	\Eb_{\Qb} [(F(x,\xi)+t)_+ -t\alpha] \nonumber
	\\ = & \underset{t \in \Rb}{\inf} \,
	\underset{\Qb \in \MM^\theta_N}{\sup} \, \Eb_{\Qb} [(F(x,\xi)+t)_+ -t\alpha]. \label{eq:min-max-equality}
	\end{align}
\end{lemma}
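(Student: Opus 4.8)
The plan is to apply the min-max theorem, Theorem~\ref{thm:min-max-shapiro}, to the bifunction $\phi(t,\Qb) := \Eb_{\Qb}[(F(x,\xi)+t)_+ - t\alpha]$, with $x \in X$ fixed throughout. Weak duality always yields $\sup_{\Qb \in \MM^\theta_N} \inf_{t \in \Rb} \phi(t,\Qb) \le \inf_{t \in \Rb} \sup_{\Qb \in \MM^\theta_N} \phi(t,\Qb)$, so it suffices to establish the reverse inequality. By Assumption~\ref{ass:main1}~\ref{ass:m-3} there is $M = M_x > 0$ with $\abs{F(x,\xi)} \le M$ for all $\xi \in \Xi$; hence, for each fixed $t$, the integrand $\xi \mapsto (F(x,\xi)+t)_+ - t\alpha$ is bounded, and it is continuous by Assumption~\ref{ass:main1}~\ref{ass:m-1}, so $\phi(t,\Qb)$ is finite for every $(t,\Qb)$ and, as a function of $\Qb$, it is continuous for the topology of weak convergence of measures.

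I would then reduce the inner optimization to a compact interval of $t$, uniformly over the ambiguity set. For $t > M$ one has $(F(x,\xi)+t)_+ = F(x,\xi)+t$, so $\phi(t,\Qb) = \Eb_\Qb[F(x,\xi)] + (1-\alpha)t \ge -M + (1-\alpha)t$; for $t < -M$ one has $(F(x,\xi)+t)_+ = 0$, so $\phi(t,\Qb) = -\alpha t$. Since $\alpha \in (0,1)$ and $\abs{\Eb_\Qb[F(x,\xi)]} \le M$ for every $\Qb$, both quantities tend to $+\infty$ as $\abs{t} \to \infty$ uniformly in $\Qb$; comparing with $\phi(0,\Qb) \in [0,M]$, there is a compact interval $I \ni 0$ with $\phi(t,\Qb) > \phi(0,\Qb)$ for every $\Qb \in \MM^\theta_N$ and every $t \notin I$. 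Because $0 \in I$, this gives $\inf_{t \in \Rb}\phi(t,\Qb) = \inf_{t \in I}\phi(t,\Qb)$ for every $\Qb$ and also $\inf_{t \in \Rb}\sup_{\Qb}\phi(t,\Qb) = \inf_{t \in I}\sup_{\Qb}\phi(t,\Qb)$, so it is enough to prove the min-max equality over $(t,\Qb) \in I \times \MM^\theta_N$.

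Finally I would check the hypotheses of Theorem~\ref{thm:min-max-shapiro} on $I \times \MM^\theta_N$ and invoke it. The set $I$ is convex and compact; $\MM^\theta_N$ is convex (a ball in the Wasserstein metric, using convexity of $W_1(\cdot,\Pbhat_N)$ in its first argument and $\PP_1(\Xi)$ being closed under convex combinations) and weakly compact by~\cite{pichler2017quantitative}. For fixed $\Qb$, $t \mapsto \phi(t,\Qb)$ is convex, being the expectation of the convex functions $t \mapsto (F(x,\xi)+t)_+$ plus the affine term $-t\alpha$, and continuous. For fixed $t$, $\Qb \mapsto \phi(t,\Qb) = \int_\Xi [(F(x,\xi)+t)_+ - t\alpha]\,\Qb(d\xi)$ is affine, hence concave, and weakly continuous (in particular weakly upper semicontinuous) by the remark in the first paragraph. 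Theorem~\ref{thm:min-max-shapiro}, applied with $t$ ranging over the compact convex set $I$ and $\Qb$ over the weakly compact convex set $\MM^\theta_N$, then gives $\inf_{t \in I}\sup_{\Qb \in \MM^\theta_N}\phi(t,\Qb) = \sup_{\Qb \in \MM^\theta_N}\inf_{t \in I}\phi(t,\Qb)$, which combined with the reduction of the second paragraph is exactly~\eqref{eq:min-max-equality}. The step I expect to be the main obstacle is establishing the \emph{weak} upper semicontinuity of $\Qb \mapsto \phi(t,\Qb)$, together with the uniform coercivity in $t$ of the second paragraph: both rely essentially on the boundedness hypothesis Assumption~\ref{ass:main1}~\ref{ass:m-3} rather than on mere integrability of $F(x,\cdot)$, since narrow convergence of measures preserves integrals only of bounded continuous functions; by contrast, the convexity and concavity checks are routine.
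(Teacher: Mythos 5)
Your proof is correct and follows essentially the same route as the paper: both invoke Theorem~\ref{thm:min-max-shapiro} with $g(t,\xi)=(F(x,\xi)+t)_+-t\alpha$ and use the boundedness of $F(x,\cdot)$ from Assumption~\ref{ass:main1}~\ref{ass:m-3} to obtain coercivity in $t$. The only cosmetic difference is that you compactify the $t$-domain to an interval $I$ before applying the theorem, whereas the paper keeps $T=\Rb$ and instead verifies directly that $h(t)=\sup_{\Qb\in\MM^\theta_N}\Eb_{\Qb}[g(t,\xi)]$ is convex, lower semicontinuous, and coercive, so that the required minimizer $\bar t$ exists; the underlying estimate is identical, and your extra checks (convexity of $\MM^\theta_N$, weak continuity in $\Qb$) are not actually required by the theorem as stated.
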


As a consequence of the above result, we can write the DRRCP~\eqref{eq:cvar-drccp} equivalently as
\begin{equation}\label{eq:cvar-equiv-drccp}
\begin{aligned}
\underset{x \in X, t \in \Rb}{\min} & \quad c^\intercal x
\\
\st & \quad \sup_{\Qb \in \MM_N^{\theta}} \Eb_{\Qb} [(F(x,\xi)+t)_+ - t\alpha] \le 0.
\end{aligned}
\end{equation}
That is, $x$ is a feasible point for~\eqref{eq:cvar-drccp} if and only if there exists $t$ such that $(x,t)$ is feasible for~\eqref{eq:cvar-equiv-drccp}.  
Having reformulated the DRRCP into~\eqref{eq:cvar-equiv-drccp}, we move on to the consistency analysis. 
We require the following assumption throughout. 

\begin{assumption}\longthmtitle{Sequence of finite-sample guarantees}\label{as:fs-guarantee}
	Sequences $\setr{\beta_N} \subset (0,1)$ and $\setr{\eps_N} \subset (0,\infty)$ are such that $\sum_{N=1}^\infty \beta_N < \infty$, $\lim_{N \to \infty} \eps_N = 0$, and the following finite-sample guarantee holds for each $N \in \naturalnumbers$,
	\begin{align}\label{eq:fs-beta}
	\Pb^N (W_1(\Pb,\Pbhat_N) \le \eps_N) \ge 1-\beta_N.
	\end{align}
\end{assumption}

The above assumption imposes that as the number of samples increases, the distance between the data-generating distribution and the empirical distribution becomes vanishingly small with higher confidence. Recent works have indeed established the existence of such sequences~\cite{esfahani2018data}. We start our analysis with some preliminary lemmas. 

\begin{lemma}\longthmtitle{Uniform convergence of distributions~\cite[Lemma 3.7]{esfahani2018data}}\label{le:peyman_conv_dist}
	Under Assumption~\ref{as:fs-guarantee}, we have 
	\begin{align*}
	\Pb^\infty \Bigl( \lim_{N \to \infty} \sup_{\Qb \in \MM_N^{\eps_N}} W_1(\Qb, \Pb) = 0 \Bigr) = 1. 
	\end{align*}
\end{lemma}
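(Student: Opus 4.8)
The plan is to combine the first Borel--Cantelli lemma with the triangle inequality for the $1$-Wasserstein distance. First I would use Assumption~\ref{as:fs-guarantee}: since $\Pb^N(W_1(\Pb,\Pbhat_N) > \eps_N) \le \beta_N$ and $\sum_{N=1}^\infty \beta_N < \infty$, the Borel--Cantelli lemma on the product space $(\Xi^\infty,\Pb^\infty)$ --- noting that $\{W_1(\Pb,\Pbhat_N) > \eps_N\}$ depends only on the first $N$ coordinates --- implies that, $\Pb^\infty$-almost surely, $W_1(\Pb,\Pbhat_N) > \eps_N$ holds for only finitely many $N$. Hence there is a measurable set $\Omega_0$ with $\Pb^\infty(\Omega_0) = 1$ such that for every realization in $\Omega_0$ there is an index $N_0$ with $W_1(\Pb,\Pbhat_N) \le \eps_N$ for all $N \ge N_0$.

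Next, fix a realization in $\Omega_0$ and $N \ge N_0$. For any $\Qb \in \MM_N^{\eps_N}$ the defining inequality of the ambiguity set~\eqref{eq:wasserstein-set} gives $W_1(\Qb,\Pbhat_N) \le \eps_N$. Since $d$ is a (complete) metric, $\Pbhat_N$ has finite first moment (it is finitely supported), and $\Pb \in \PP_1(\Xi)$, the map $W_1$ is a genuine metric on $\PP_1(\Xi)$, so the triangle inequality yields
$$
W_1(\Qb,\Pb) \le W_1(\Qb,\Pbhat_N) + W_1(\Pbhat_N,\Pb) \le 2\eps_N .
$$
Taking the supremum over $\Qb \in \MM_N^{\eps_N}$ gives $0 \le \sup_{\Qb \in \MM_N^{\eps_N}} W_1(\Qb,\Pb) \le 2\eps_N$ for all $N \ge N_0$.

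Finally, since $\eps_N \to 0$, the squeeze theorem shows $\lim_{N\to\infty}\sup_{\Qb \in \MM_N^{\eps_N}} W_1(\Qb,\Pb) = 0$ for every realization in $\Omega_0$; as $\Omega_0$ has full measure and is contained in the event in the statement, the claim follows. I do not expect a genuine obstacle here: the only points needing a careful line are the applicability of the triangle inequality (all three measures must lie in $\PP_1(\Xi)$, where $W_1$ is an honest metric) and the bookkeeping in the Borel--Cantelli step; measurability of $\omega \mapsto \sup_{\Qb} W_1(\Qb,\Pb)$ can be sidestepped, since it suffices to exhibit the full-measure measurable set $\Omega_0$ inside the target event.
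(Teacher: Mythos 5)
Your proof is correct and follows exactly the standard argument (Borel--Cantelli plus the triangle inequality for $W_1$ on $\PP_1(\Xi)$) that underlies \cite[Lemma 3.7]{esfahani2018data}, which is precisely the proof the paper invokes and omits. No discrepancies to report.
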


The proof is analogous to the proof of~\cite[Lemma 3.7]{esfahani2018data} and is omitted in the interest of space. The above result shows that if the Wasserstein radius decreases to zero in a carefully chosen manner, then any sequence of distributions drawn from the ambiguity sets converges to the true distribution. 

\begin{remark}\longthmtitle{Comparison with~\cite{esfahani2018data}}
Following the above lemma, \cite{esfahani2018data} proves asymptotic consistency of the optimal value and optimizers of distributionally robust expected cost minimization programs under suitable boundedness and continuity assumptions on the cost function. While constrained optimization programs can be written equivalently as expected cost minimization problems via an indicator function on the feasibility set, the consistency results from \cite{esfahani2018data} do not directly apply as the indicator function is not bounded for points that violate the constraints.  \oprocend
\end{remark}

We now show that as the number of samples increases, the constraint function of the DRRCP's equivalent form~\eqref{eq:cvar-equiv-drccp} converges uniformly to that of the RCP \eqref{eq:def_equiv_rcp}. We first define
\begin{align}
v(x,t) & := \Eb_\Pb [(F(x,\xi)+t)_+ - t\alpha], \label{eq:vxt_def}
\\ 
\widehat{v}_N(x,t) & := \sup_{\Qb \in \MM_N^{\eps_N}} \Eb_{\Qb} [(F(x,\xi)+t)_+ - t\alpha], 	\label{eq:vNxt_def}
\end{align}
where note that $\widehat{v}_N$ is a random function as the ambiguity set depends on the samples. We now establish uniform $\Pb^\infty$-almost sure convergence of $\widehat{v}_N$ from above to $v$. For this, we require the constraint function to be uniformly Lipschitz continuous as stated in Assumption \ref{ass:main1}. 
\begin{lemma}\longthmtitle{Uniform convergence of $\widehat{v}_N$ from above to $v$}\label{le:unif-conv-v}
	Let Assumption~\ref{ass:main1}~\ref{ass:m-1},~\ref{ass:m-2} and~\ref{ass:m-4} hold. Further, suppose Assumption~\ref{as:fs-guarantee} holds.  Then, the following hold
	\begin{align*}
	\Pb^\infty \Bigl( v(x,t) \le \widehat{v}_N(x,t) \text{ for all sufficiently large $N$} \Bigr) & = 1,
	\\
	\Pb^\infty \Bigl( \lim_{N \to \infty} \sup_{x \in X, t \in \Rb} |\widehat{v}_N(x,t) - v(x,t)| = 0 \Bigr) & = 1,
	\end{align*}
	where the first equality is satisfied for all $(x,t) \in X \times \real$.
\end{lemma}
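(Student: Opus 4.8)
The plan is to establish the two assertions separately, both relying on Lemma~\ref{le:peyman_conv_dist} together with the Kantorovich--Rubinstein dual bound for $W_1$.

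For the first assertion (eventual domination), I would start from the fact that $\sum_N\beta_N<\infty$, so the Borel--Cantelli lemma applied to the events $\{W_1(\Pb,\Pbhat_N)>\eps_N\}$ yields that $\Pb^\infty$-almost surely $W_1(\Pb,\Pbhat_N)\le\eps_N$ for all sufficiently large $N$. On this event one has $\Pb\in\MM_N^{\eps_N}$, so $\Pb$ is one of the measures over which the supremum in~\eqref{eq:vNxt_def} is taken, whence $\widehat{v}_N(x,t)\ge\Eb_\Pb[(F(x,\xi)+t)_+-t\alpha]=v(x,t)$ for every $(x,t)\in X\times\Rb$ simultaneously; in particular the stated probability-one statement holds for each fixed $(x,t)$.

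For the second assertion (uniform convergence) I would first use that the affine term $-t\alpha$ does not involve $\xi$ and therefore cancels in the difference, giving
\[
\widehat{v}_N(x,t)-v(x,t)=\sup_{\Qb\in\MM_N^{\eps_N}}\Eb_\Qb[(F(x,\xi)+t)_+]-\Eb_\Pb[(F(x,\xi)+t)_+],
\]
so that $|\widehat{v}_N(x,t)-v(x,t)|\le\sup_{\Qb\in\MM_N^{\eps_N}}\bigl|\Eb_\Qb[(F(x,\xi)+t)_+]-\Eb_\Pb[(F(x,\xi)+t)_+]\bigr|$. Next I would observe that for every $x\in X$ and $t\in\Rb$ the map $\xi\mapsto(F(x,\xi)+t)_+$ is $L$-Lipschitz on $\Xi$, being a composition of $F(x,\cdot)$ (which is $L$-Lipschitz by Assumption~\ref{ass:main1}~\ref{ass:m-4}), a translation, and the $1$-Lipschitz map $u\mapsto u_+$, with the \emph{same} constant $L$ for all $x$ and $t$. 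This Lipschitz bound also yields linear growth in $\xi$, hence integrability of the map against any member of $\PP_1(\Xi)$ and against $\Pb$ (which lies in $\PP_1(\Xi)$, as implied by Assumption~\ref{as:fs-guarantee}), so that $v$ and $\widehat{v}_N$ are finite-valued. Applying the elementary estimate $|\Eb_\mu h-\Eb_\nu h|\le L\,W_1(\mu,\nu)$, valid for any $L$-Lipschitz $h$ and obtained by integrating $|h(\xi)-h(\omega)|\le L\,d(\xi,\omega)$ against an arbitrary coupling and minimizing over couplings in~\eqref{eq:def_wasserstein}, I would then conclude
\[
\sup_{x\in X,\,t\in\Rb}|\widehat{v}_N(x,t)-v(x,t)|\le L\sup_{\Qb\in\MM_N^{\eps_N}}W_1(\Qb,\Pb),
\]
and Lemma~\ref{le:peyman_conv_dist} shows the right-hand side tends to $0$ $\Pb^\infty$-almost surely.

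The step I expect to be the main obstacle is obtaining uniformity over the \emph{unbounded} range $t\in\Rb$: a priori $|\widehat{v}_N(x,t)-v(x,t)|$ might degrade as $|t|\to\infty$, and this is defused only by noting that the affine term $-t\alpha$ cancels and that the Lipschitz modulus of $\xi\mapsto(F(x,\xi)+t)_+$ equals $L$ regardless of $t$ and $x$, so a single quantity $L\sup_{\Qb}W_1(\Qb,\Pb)$ bounds the error everywhere at once. A secondary technical matter is integrability, since boundedness of $F$ in $\xi$ (Assumption~\ref{ass:main1}~\ref{ass:m-3}) is deliberately not used here and must be replaced by the linear-growth bound from the Lipschitz property together with the finite-first-moment property of the measures in $\MM_N^{\eps_N}$ and of $\Pb$.
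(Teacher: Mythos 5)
Your proof is correct and follows essentially the same route as the paper: Borel--Cantelli for the eventual domination $v \le \widehat{v}_N$, and the $(x,t)$-independent Lipschitz constant $L$ of $\xi \mapsto (F(x,\xi)+t)_+$ combined with the bound $|\Eb_\mu h - \Eb_\nu h| \le L\,W_1(\mu,\nu)$ and Lemma~\ref{le:peyman_conv_dist} for the uniform convergence. The only (harmless) difference is that you pass the absolute value inside the supremum via $\abs{\sup_{\Qb}a_{\Qb} - b} \le \sup_{\Qb}\abs{a_{\Qb}-b}$, whereas the paper selects $\delta_N$-optimal distributions $\Qb_N^{(x,t)}$; both yield the same $(x,t)$-independent bound $L\sup_{\Qb\in\MM_N^{\eps_N}}W_1(\Qb,\Pb)$.
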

\begin{proof}
	Fix any $(x,t) \in X \times \real$. From~\eqref{eq:fs-beta}, we deduce that the following inequality holds with probability at least $1-\beta_N$,
	\begin{align*}
		\Eb_\Pb [(F(x,\xi) + t)_+ - t \alpha ] \le \sup_{\Qb \in \MM_N^{\eps_N}} \Eb_\Qb [(F(x,\xi) + t)_+ - t \alpha].
	\end{align*}
	That is, $\Pb^N (v(x,t) \le \widehat{v}_N(x,t) ) \ge 1-\beta_N$, for all  $N \in \naturalnumbers$. 	Since $\sum_{N=1}^\infty \beta_N < \infty$, from Borel-Cantelli Lemma~\cite[Theorem 2.3.6]{durrett2010book}, we obtain the first assertion.  
		
	From the uniform Lipschitz condition on $F$ stated in Assumption~\ref{ass:main1}~\ref{ass:m-4}, we deduce that for any fixed $(x,t) \in X \times \real$ and any $\xi, \xi' \in \Xi$, 
	\begin{align*}
	& \abs{\bigl((F(x,\xi) + t)_+ - t \alpha \bigr) - \bigl( (F(x,\xi') + t)_+ - t \alpha \bigr)}
	\\
	& \qquad \qquad \qquad= \abs{(F(x,\xi) + t)_+ - (F(x,\xi') + t)_+}
	\\
	& \qquad \qquad \qquad \le \abs{F(x,\xi) - F(x,\xi')} \le L \norm{\xi - \xi'},
	\end{align*}
	where the first inequality holds because the operator $( \cdot )_+$ is Lipschitz with constant unity. The above reasoning implies that the map $\xi \mapsto (F(x,\xi) + t)_+ - t \alpha$ is uniformly Lipschitz over the set $X \times \real$. Using this fact in the dual form of the definition of the Wasserstein metric \cite{esfahani2018data}, we conclude that
	\begin{align}
	\big| \Eb_{\Pb_1}[(F(x,\xi) + t)_+ - t \alpha]  - & \Eb_{\Pb_2}[(F(x,\xi) + t)_+ - t \alpha] \big| \notag
	\\
	& \le L W_1(\Pb_1,\Pb_2),\label{eq:lip-wass}
	\end{align}
	for any two distributions $\Pb_1$ and $\Pb_2$. Consider now a sequence of positive real numbers $\delta_N$, $N\in\Nb$ such that $\lim_{N \to \infty} \delta_N = 0$. For each $(x,t) \in X \times \Rb$, let $\Qb_N^{(x,t)} \in \MM_N^{\eps_N}$ be a $\delta_N$-optimal distribution such that
	\begin{align}\label{eq:v-ineq} 
	\Eb_{\Qb_N^{(x,t)}} & [(F(x,\xi)+t)_+ - t\alpha] \le \notag
	\\
	& \widehat{v}_N(x,t)  \leq \Eb_{\Qb_N^{(x,t)}} [(F(x,\xi)+t)_+ - t\alpha] + \delta_N.
	\end{align}
	Existence of such a distribution is due to the fact that expectation is a linear operator. Next, we have 
	\begin{align}
	|\widehat{v}_N(x,t) - v(x,t)| & \leq |\Eb_{\Qb_N^{(x,t)}} [(F(x,\xi)+t)_+ - t\alpha] \notag
	\\ 
	& \qquad - \Eb_{\Pb} [(F(x,\xi)+t)_+ - t\alpha]| + \delta_N \notag 
	\\ 
	& \leq L W_1(\Qb_N^{(x,t)},\Pb) + \delta_N \notag
	\\ 
	& \leq L \sup_{\Qb \in \MM_N^{\eps_N}} W_1(\Qb_N,\Pb) + \delta_N. \label{eq:series-q}
	\end{align}
	The first inequality above uses~\eqref{eq:v-ineq}, the second inequality follows from the condition~\eqref{eq:lip-wass}, and the last inequality due to the fact that $\Qb_N^{(x,t)} \in \MM_N^{\eps_N}$.  Since the right-hand side of~\eqref{eq:series-q} is independent of $(x,t)$, we have
	\begin{align*}
	\!\! \sup_{(x,t) \in X \times \Rb} \!\!\! |\widehat{v}_N(x,t) - v(x,t)| \leq L \!\!\sup_{\Qb \in \MM_N^{\eps_N}}  \!\!\!W_1(\Qb,\Pb) + \delta_N.
	\end{align*}
	The proof then concludes by invoking Lemma~\ref{le:peyman_conv_dist}. 
\end{proof}
We note here that the convergence from above of $\widehat{v}_N$ to $v$ is due to summability of $\beta_N$ in Assumption~\ref{as:fs-guarantee}. If one only needs convergence, then $\eps_N$ tending to zero is sufficient.
We now present our main result. We denote by $\Jrcp$ the optimal value of~\eqref{eq:def_rcp} and for a given $N$, we let $\Jdrrcp_N$ and $\{\xdrrcp_N\}_{N \in \Nb}$ denote the optimal value and an optimizer of~\eqref{eq:cvar-drccp}, resp., where $\theta$ is set to $\epsilon_N$ satisfying Assumption~\ref{as:fs-guarantee}, i.e., $\epsilon_N$ is chosen depending on $N$ and $\beta_N$ satisfying~\eqref{eq:fs-beta}.

\begin{theorem}\longthmtitle{Asymptotic consistency of the DRRCP~\eqref{eq:cvar-drccp}}\label{th:asymp-cvar}
	Let Assumptions~\ref{ass:main1} and~\ref{as:fs-guarantee} hold. 
	Assume that the feasibility set of~\eqref{eq:def_rcp} has a nonempty interior and that the optimizers of~\eqref{eq:def_rcp} belong to a compact set $\YY \subset X$. Moreover, assume that for sufficiently large $N$ and any sequence of i.i.d samples $\{\data_i\}_{i=1}^N$, optimizers of~\eqref{eq:cvar-drccp} with $\theta$ replaced with $\epsilon_N$ belong to $\YY$.   
	Then, the following statements hold $\Pb^\infty$ - almost surely:
	\begin{enumerate}
		\item $\Jrcp \le \Jdrrcp_N$ for all sufficiently large $N$,
		\item $\Jdrrcp_N \to \Jrcp$ as $N \to \infty$, and 
		\item any accumulation point of any sequence of optimizers $\{\xdrrcp_N\}_{N\in\Nb}$ is an optimal solution of the problem \eqref{eq:def_rcp}.
	\end{enumerate}
\end{theorem}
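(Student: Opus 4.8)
The plan is to argue entirely with the reformulated problems~\eqref{eq:def_equiv_rcp} and~\eqref{eq:cvar-equiv-drccp}, whose $(x,t)$-feasible sets are $\{v\le 0\}$ and $\{\widehat v_N\le 0\}$, and to drive everything from a single almost-sure event. Since $\Pb^N(W_1(\Pb,\Pbhat_N)>\eps_N)\le\beta_N$ with $\sum_N\beta_N<\infty$, the Borel--Cantelli lemma gives that $\Pb^\infty$-almost surely there is a (path-dependent) $N_0$ with $\Pb\in\MM_N^{\eps_N}$ for every $N\ge N_0$; on this event $\widehat v_N(x,t)=\sup_{\Qb\in\MM_N^{\eps_N}}\Eb_\Qb[(F(x,\xi)+t)_+-t\alpha]\ge\Eb_\Pb[(F(x,\xi)+t)_+-t\alpha]=v(x,t)$ for \emph{every} $(x,t)\in X\times\Rb$ (this uniform two-sided use of the event, rather than the pointwise first claim of Lemma~\ref{le:unif-conv-v}, is what lets us control the random optimizers later). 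Intersecting with the uniform convergence of Lemma~\ref{le:unif-conv-v}, almost surely there is $N_1\ge N_0$ such that $v(x,t)\le\widehat v_N(x,t)\le v(x,t)+\rho_N$ for all $N\ge N_1$ and all $(x,t)$, where $\rho_N:=\sup_{x,t}|\widehat v_N(x,t)-v(x,t)|\to 0$.

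Statement (i) is then immediate: on the event $\Pb\in\MM_N^{\eps_N}$, any $(x,t)$ feasible for~\eqref{eq:cvar-equiv-drccp} satisfies $v(x,t)\le\widehat v_N(x,t)\le 0$, so the feasible set of the DRRCP is contained in that of the RCP, and minimizing the common objective $c^\intercal x$ over the smaller set yields $\Jrcp\le\Jdrrcp_N$ for all large $N$.

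For statement (ii), in view of (i) it suffices to show $\limsup_N\Jdrrcp_N\le\Jrcp$ almost surely. Fix $\eta>0$. The nonempty-interior hypothesis on the feasible set of~\eqref{eq:def_rcp} supplies a strictly feasible pair $(\xb,\tb)\in X\times\Rb$ with $v(\xb,\tb)<0$; let $\xo$ be an optimizer of~\eqref{eq:def_rcp} and $\too$ a corresponding $t$ with $v(\xo,\too)\le 0$, so $c^\intercal\xo=\Jrcp$. Because $(x,t)\mapsto(F(x,\xi)+t)_+-t\alpha$ is jointly convex (Assumption~\ref{ass:main1}~\ref{ass:m-2} together with convexity and monotonicity of $(\cdot)_+$), $v$ is jointly convex, so for $\lm\in(0,1]$ the pair $x_\lm:=(1-\lm)\xo+\lm\xb\in X$, $t_\lm:=(1-\lm)\too+\lm\tb$ obeys $v(x_\lm,t_\lm)\le\lm\,v(\xb,\tb)<0$ while $c^\intercal x_\lm=\Jrcp+\lm\,c^\intercal(\xb-\xo)$. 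Choose $\lm$ small enough that $\lm\,|c^\intercal(\xb-\xo)|\le\eta$; then once $N$ is large enough that $\rho_N<\lm\,|v(\xb,\tb)|$ we get $\widehat v_N(x_\lm,t_\lm)\le v(x_\lm,t_\lm)+\rho_N<0$, i.e.\ $(x_\lm,t_\lm)$ is feasible for~\eqref{eq:cvar-equiv-drccp}, whence $\Jdrrcp_N\le c^\intercal x_\lm\le\Jrcp+\eta$. Since $\eta$ was arbitrary, $\limsup_N\Jdrrcp_N\le\Jrcp$, and (ii) follows by combining with (i).

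For statement (iii), eventually $\xdrrcp_N\in\YY$ (hypothesis), so accumulation points exist; let $\xdrrcp_{N_k}\to\bar x$. By (ii) and continuity of $x\mapsto c^\intercal x$, $c^\intercal\bar x=\lim_k\Jdrrcp_{N_k}=\Jrcp$. For feasibility, note that $g(x):=\inf_{t\in\Rb}v(x,t)$ is convex and finite on $\Rb^n$ (Assumption~\ref{ass:main1}~\ref{ass:m-2} and~\ref{ass:m-3}), hence continuous, and that $g(x)\le 0$ exactly when $x$ is feasible for~\eqref{eq:def_rcp}; thus the feasible set of~\eqref{eq:def_rcp} is closed, and by the inclusion in (i) it contains $\xdrrcp_{N_k}$ for all large $k$, hence also $\bar x$. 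Therefore $\bar x$ is feasible for~\eqref{eq:def_rcp} and attains $\Jrcp$, i.e.\ it is an optimizer. I expect the main obstacle to be the $\limsup$ step in (ii): the only way to exhibit, for each large $N$, a point that is simultaneously feasible for the sampled DRRCP and $\eta$-optimal for the RCP is to retract an RCP optimizer toward a strictly feasible point and absorb the one-sided error $\rho_N$; correspondingly, extracting a genuine Slater pair $(\xb,\tb)$ from the interiority assumption (rather than merely a boundary-feasible interior point) is the delicate part, everything else being Borel--Cantelli bookkeeping and convexity.
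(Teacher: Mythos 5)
Your proof is correct and reaches all three conclusions, but it is organized differently from the paper's in two respects worth spelling out. First, where the paper justifies statement (i) by citing the pointwise-in-$(x,t)$ first assertion of Lemma~\ref{le:unif-conv-v}, you upgrade Borel--Cantelli to the uniform statement that almost surely $\Pb\in\MM_N^{\eps_N}$ for all large $N$, hence $\widehat{v}_N\ge v$ everywhere and $\FFdrrcp_N\subseteq\FFrcp$; this is the cleaner way to handle the random optimizers, since the pointwise version carries an exceptional null set depending on $(x,t)$. Second, you dispense entirely with the paper's Step~2 (the contradiction argument establishing $\sup_{(x,t)\in\FFdrrcp_N\cap\WW}\dist((x,t),\FFrcp)\to 0$): the eventual inclusion $\FFdrrcp_N\subseteq\FFrcp$ together with closedness of $\FFrcp$ (via continuity of the finite convex function $x\mapsto\inf_{t}v(x,t)$) already forces accumulation points of DRRCP optimizers to be RCP-feasible. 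The trade-off is that your route leans on summability of $\beta_N$ even for parts (ii)--(iii), whereas the paper's set-convergence argument needs only uniform convergence of $\widehat{v}_N$ to $v$, i.e.\ only $\eps_N\to 0$, as the remark after Lemma~\ref{le:unif-conv-v} emphasizes. Your $\limsup$ step is essentially the paper's Step~3: both retract an optimizer toward the interior of $\FFrcp$ and use uniform convergence to make the retracted point DRRCP-feasible; your explicit convex combination with a Slater pair $(\xb,\tb)$ satisfying $v(\xb,\tb)<0$ is just a more concrete version. Note that extracting such a strict pair from ``nonempty interior of the feasibility set'' is a genuine (if standard and minor) leap, since $v$ could in principle vanish on an open set; but the paper's own proof makes exactly the same leap when it concludes $(x_{\bar{k}},t_{\bar{k}})\in\FFdrrcp_N$ for large $N$ from membership in the interior of $\FFrcp$, so you are no worse off, and you explicitly flag this as the delicate point.
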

\begin{proof}
	The first statement here follows from the first assertion of Lemma~\ref{le:unif-conv-v}. For the next two statements, the proof strategy is to show an analogous convergence argument: that the optima and optimizers of~\eqref{eq:cvar-equiv-drccp} approach~\eqref{eq:def_equiv_rcp}. All convergence statements in this proof involving random quantities hold $\Pb^\infty$-almost surely and we omit restating this fact for the sake of brevity. 
	Denote the feasibility sets of~\eqref{eq:def_equiv_rcp} and~\eqref{eq:cvar-equiv-drccp} as $\FFrcp$ and $\FFdrrcp_N$, respectively. Then, $\FFrcp  = \setdef{(x,t) \in X \times \Rb}{v(x,t) \le 0}$ and $\FFdrrcp_N = \setdef{(x,t) \in X \times \Rb}{\widehat{v}_N(x,t) \le 0}$. Recall that the set $\FFdrrcp_N$ is random.
	
	\emph{Step 1: Defining $\WW$:} Since $F$ is continuous, $\YY$ is compact, and $F(x,\cdot)$ is bounded over $\Xi$ for every $x \in \YY$, we deduce that the set $\setdefb{t}{\Eb_{\Pb} [(F(x,\xi) +t)_+] - t\alpha \le 0, x \in \YY}$ is compact. Recall that optimizers of~\eqref{eq:def_rcp} belong to $\YY$. Thus, there exists a compact set $\TT \subset \real$ such that optimizers of~\eqref{eq:def_equiv_rcp} belong to the set $\WW: = \YY \times \TT$. Similarly, for all sufficiently large $N$ and all sequence of $N$ i.i.d samples, the set of optimizers of~\eqref{eq:cvar-equiv-drccp} (with $\theta$ replaced with $\epsilon_N$) belong to the set $\WW$. Since the intersection of $\YY$ and the feasibility set of~\eqref{eq:def_rcp} has a nonempty interior, one can assume, without loss of generality, that $\WW \cap \FFrcp$ has a nonempty interior. 
	
	\emph{Step 2: Establishing $\FFdrrcp_N \to \FFrcp$:} Following Lemma~\ref{le:unif-conv-v}, we know that $\widehat{v}_N$ converges uniformly $\Pb^\infty$-almost surely to $v$. Using this fact, one can establish convergence, defined in an appropriate sense, of $\FFdrrcp_N$ to $\FFrcp$.  Specifically, we will show 
	\begin{align}\label{eq:conv-dist-set}
	\lim_{N \to \infty}  \sup_{(x,t) \in \FFdrrcp_N \cap \WW } \dist((x,t), \FFrcp) = 0,
	\end{align}
	where $\dist( (x,t) , \FFrcp)$ is the distance of the point $(x,t)$ to the set $\FFrcp$, that is, $\dist( (x,t), \FFrcp) = \inf_{(x',t') \in \FFrcp} \norm{(x,t) - (x',t')}$. We proceed with a contradiction argument to show~\eqref{eq:conv-dist-set}. Recall the assertion that~\eqref{eq:conv-dist-set} holds $\Pb^\infty$-almost surely. Now, for the sake of contradiction, assume that there exists a set of sequence of i.i.d samples
	\begin{align*}
	\HH := \setdefb{ \{\data_N (\sigma) \}_{N \in \naturalnumbers}}{\sigma \in \Sigma} 
	\end{align*}
	that has finite measure under the distribution $\Pb^\infty$ and each element of $\HH$ violates the limit~\eqref{eq:conv-dist-set}. Here, $\Sigma$ is some uncountable index set.  
	To be more precise, $\HH$ gives rise to a set of sequences $\setdef{\{\FFdrrcp_N(\sigma)\}_{N \in \naturalnumbers}}{ \sigma \in \Sigma}$ such that each element in this set violates~\eqref{eq:conv-dist-set}. This in turn implies that for each $\sigma \in \Sigma$, one can assign a sequence $\setr{(x_N(\sigma),t_N(\sigma)) \in \FFdrrcp_N(\sigma) \cap \WW}_{N \in \naturalnumbers}$ and a constant $\gamma_\sigma > 0$ such that 	 
	\begin{align}\label{eq:contra-dist}
	\dist \big( (x_N(\sigma),t_N(\sigma)), \FFrcp \big) > \gamma_\sigma, \quad \forall \, N \in \naturalnumbers.
	\end{align}
	Since $\WW$ is compact, there exists a subsequence of $\{(x_N(\sigma),t_N(\sigma))\}$ that converges to some $(\xb(\sigma),\tb(\sigma)) \in \WW$. We denote this subsequence by $\{(x_N(\sigma),t_N(\sigma))\}$ for convenience. Then, due to continuity of $v$, for any $\eps/2 > 0$, there exists $N_1(\sigma) \in \naturalnumbers$ such that 
	\begin{align*}
	\abs{v(x_N(\sigma), t_N(\sigma)) - v(\xb(\sigma),\tb(\sigma))} & \le \eps/2
	\end{align*}
	for all $N \ge N_1(\sigma)$. Moreover, by $\Pb^\infty$-almost sure uniform convergence of $\widehat{v}_N \to v$, for any $\eps/2 > 0$, for almost all $\sigma \in \Sigma$, there exists $N_2(\sigma) \in \naturalnumbers$ such that 
	\begin{align*}
	\abs{\widehat{v}_N (x_N (\sigma), t_N (\sigma)) - v(x_N(\sigma), t_N(\sigma))} & \le \eps/2, 
	\end{align*}
	for all $N \ge N_2(\sigma)$. Using the above two inequalities, we conclude that for almost all $\sigma \in \Sigma$, for any $\eps > 0$, there exists $\bar{N}(\sigma)$ such that
	\begin{align*}
	\abs{\widehat{v}_N(x_N(\sigma),t_N(\sigma)) - v(\xb(\sigma),\tb(\sigma))} \le \eps, \quad \forall N \ge \bar{N}(\sigma).
	\end{align*}
	This implies that $\lim_{N \to \infty} \widehat{v}_N(x_N(\sigma),t_N(\sigma)) = v(\xb(\sigma),\tb(\sigma))$ for almost all $\sigma$. Since $\widehat{v}_N(x_N(\sigma),t_N(\sigma)) \le 0$ for all $N$, we get $v(\xb(\sigma),\tb(\sigma)) \le 0$, that is, $(\xb(\sigma),\tb(\sigma)) \in \FFrcp$ for almost all $\sigma \in \Sigma$. This is in contradiction with~\eqref{eq:contra-dist}. Hence, we have established~\eqref{eq:conv-dist-set}. 
	
	\emph{Step 3: Convergence of optimizers and optimal values:} Now let $(x_N,t_N) \in \FFdrrcpo_N$ for all $N$, where $\FFdrrcpo_N$ is the set of optimal solutions of~\eqref{eq:cvar-equiv-drccp}. Since the sequence $\setr{(x_N,t_N)}$ is contained in a compact set $\WW$, by abuse of notation, we deduce that $(x_N,t_N) \to (\xb,\tb)$ for some $(\xb,\tb) \in \WW$. Since $\FFrcp$ is closed and~\eqref{eq:conv-dist-set} holds, we get $(\xb,\tb) \in \FFrcp$. By continuity,
	\begin{align}\label{eq:first-v-ineq-1}
	\lim_{N \to \infty} c^\intercal x_N = c^\intercal \xb \ge \Jrcp,
	\end{align}	
	where $\Jrcp$ is the optimum value of~\eqref{eq:def_rcp}.
	
	Now, let $(x^*,t^*) \in \FFrcpo$, where $\FFrcpo$ is the set of optimal solutions of \eqref{eq:def_equiv_rcp}. Since $\FFrcp$ is convex and its interior is nonempty, there exists a sequence $\setr{(x_k,t_k)}_{k \in \naturalnumbers}$ belonging to the interior of $\FFrcp$ such that $(x_k,t_k) \to (x^*,t^*)$. This implies that for any $\epsilon > 0$, there exists $\bar{k}$ satisfying
	\begin{align}\label{eq:t-exp}
	c^\intercal x_{\bar{k}} - \Jrcp = c^\intercal x_{\bar{k}} - c^\intercal x^* \le \eps.
	\end{align} 
	Since $\setr{(x_k,t_k)}$ belongs to the interior of $\FFrcp$ and $\widehat{v}_N$ converges to $v$ uniformly over $X \times \Rb$, we deduce that $(x_{\bar{k}},t_{\bar{k}}) \in \FFdrrcp_N$ for all sufficiently large $N$. For such $N$, optimality of $x_N$ implies that $c^\intercal x_{\bar{k}} \ge c^\intercal x_N$. Using this fact in~\eqref{eq:t-exp}, we get	$\Jrcp \ge c^\intercal x_{\bar{k}}  - \eps \ge c^\intercal x_N - \eps$. 
	Taking $N \to \infty$ gives $\Jrcp \ge c^\intercal \xb - \eps$. Since $\eps$ can be chosen arbitrarily small, we obtain $\Jrcp \ge c^\intercal \xb$. Combined with~\eqref{eq:first-v-ineq-1}, we conclude $c^\intercal \xb = c^\intercal x^*$ and hence $\xb \in \FFrcpo$. Finally, the argument holds for any convergent subsequence of $\setr{(x_N,t_N)}$. The convergence of the optimum values then follows by continuity. 
\end{proof}

The first part of our result, that $\Jrcp \le \Jdrrcp_N$ for all sufficiently large $N$, signifies that 
the solution of the DRRCP is a conservative approximation of the solution of the RCP in the asymptotic regime. 

\begin{remark}\longthmtitle{Discussion on assumptions of Theorem~\ref{th:asymp-cvar}}
Our assumption on the interior of the feasibility set of \eqref{eq:def_rcp} being nonempty is a fairly standard assumption in consistency analysis, e.g.,~\cite[Theorem 5.5 and Proposition 5.30]{shapiro2009lectures}. This ensures that the sample-based optimization problem (problem stated in \eqref{eq:cvar-drccp}) is feasible for large $N$. A sufficient condition for this assumption to hold is the existence of  $x \in X$ such that $F(x,\xi) < 0$ for all $\xi \in \Xi$; this condition can be checked without knowing $\Pb$ or samples. 

Similarly, our assumption on the existence of a compact set $\YY \subset X$ containing the optimizers of \eqref{eq:def_rcp} and \eqref{eq:cvar-drccp} is also a standard one for consistency analysis \cite[Theorem 5.3 and Proposition 5.3]{shapiro2009lectures}, and is required to establish the convergence $\FFdrrcp_N \to \FFrcp$. It is trivially satisfied if $X$ is compact. If $X$ is unbounded, this assumption holds if $x \mapsto \Eb_{\Pb} [F(x,\xi)]$ and $x \mapsto \Eb_{\Qb}[F(x,\xi)]$ are coercive for some distribution $\Qb \in \MM^{\theta}_N$ (e.g., the empirical distribution). \oprocend
\end{remark}

Next, we analyze the consistency of distributionally robust chance-constrained programs. 

\section{Distributionally robust chance-constrained programs and their consistency}\label{sec:drccp}

Consider the \emph{chance-constrained program} (CCP), 
\begin{equation}\label{eq:def_ccp}
\begin{aligned}
\underset{x\in X}{\min} & \quad c^\intercal x
\\
\st  & \quad  \Pb( (F(x,\xi) \le 0 ) \geq 1-\alpha, 
\end{aligned}
\end{equation}
where we borrow the notation from Section~\ref{sec:drrcp}. In comparison with the RCP~\eqref{eq:def_rcp}, here, we require the uncertain constraint $F(x,\xi) \le 0$ to hold with a high probability, i.e., at least $1-\alpha$. Note that this constraint is equivalent to $\VaR{\Pb}_\alpha(F(x,\xi)) \leq 0$ and in general, the set of points satisfying the constraint is non-convex. 

The distributionally robust version of the CCP~\eqref{eq:def_ccp}, which we term as the \emph{distributionally robust chance-constrained program} (DRCCP), is given as  
\begin{equation}\label{eq:def_drccp}
\begin{aligned}
\underset{x\in X}{\min} & \quad c^\intercal x
\\
\st  & \quad  \inf_{\Qb \in \MM^\theta_N} \Qb( (F(x,\xi) \le 0 ) \geq 1-\alpha,
\end{aligned}
\end{equation}
where $\MM^\theta_N$ is the ambiguity set defined in~\eqref{eq:wasserstein-set}. We next present the consistency analysis for the DRCCP. As explained before, the chance-constraint can render the feasibility set non-convex. Therefore, consistency requires the following conditions which are different from Assumption~\ref{ass:main1}.
\begin{assumption}\label{as:regularity-ccp}
	\longthmtitle{Regularity of CCP}
	The map $F$ is uniformly continuous and either of the following holds:
	\begin{enumerate}
		\item The distribution $\Pb$ satisfies 
		\begin{align*}
		\Pb(\setdef{\xi \in \Xi}{F(x,\xi) = 0}) = 0, \quad \text{ for all } x \in X.
		\end{align*}
		\item The set-valued map 
		$H(x) := \setdef{\xi \in \Xi}{F(x,\xi) \le 0}$
		is convex-valued and continuous over $X$ (where continuity implies inner and outer semicontinuity of the set-valued map) and for any $x \in X$,
		$\Pb(\mathrm{bd} H(x)) = 0$,
		where $\mathrm{bd} H(x)$ denotes the boundary of the set $H(x)$.
	\end{enumerate}
\end{assumption}

We have the following consistency result. The proof is largely based on results from~\cite{guo2017convergence}, where the consistency analysis was done for ambiguity sets that are not random. A key step in the proof is to establish almost sure convergence of the feasibility set of the DRCCP to the feasibility set of the CCP which requires the constraint function to be continuous. Assumption \ref{as:regularity-ccp}, inspired by \cite{guo2017convergence}, states complementary sufficient conditions which ensure this; \cite[Example 4.3]{guo2017convergence} illustrates how  Assumption \ref{as:regularity-ccp} (ii) holds in cases where Assumption \ref{as:regularity-ccp} (i) fails.\footnote{The assumption is satisfied for several classes of functions. For example, if the constraint function has an affine separable form $F(x,\xi) = Ax + B\xi$, $B$ has full column rank, and $\Pb$ has a continuous distribution, then $\Pb(F(x,\xi) = 0) = 0$ for any $x$.}

\begin{theorem}\longthmtitle{Asymptotic consistency of the DRCCP~\eqref{eq:def_drccp}}\label{theorem:asymp-drccp}
	Let Assumptions~\ref{as:fs-guarantee} and~\ref{as:regularity-ccp} hold. Assume that there exists a compact set $\YY \subset X$ such that the optimizers of~\eqref{eq:def_ccp} belong to $\YY$. Suppose there exists an optimizer $x^*$ of~\eqref{eq:def_ccp} that belongs to the closure of the set
	$\setdef{x \in X}{\Pb(F(x,\xi) \le 0) > 1-\alpha}$.
	Moreover, assume that for sufficiently large $N$ and any sequence of i.i.d samples $\{\data_i\}_{i=1}^N$, optimizers of~\eqref{eq:def_drccp} with $\theta$ replaced with $\epsilon_N$ belong to $\YY$. Then, the following hold $\Pb^\infty$ - almost surely:
	\begin{enumerate}
		\item $\Jccp \le \Jdrccp_N$ for all sufficiently large $N$,
		\item $\Jdrccp_N \to \Jccp$ as $N \to \infty$, and 
		\item any accumulation point of any sequence of optimizers $\{\xdrccp_N\}_{N\in\Nb}$ is an optimizer of the problem \eqref{eq:def_ccp}.
	\end{enumerate}
	Here, $\Jccp$ is the optimal value of~\eqref{eq:def_ccp} and for a given $N$, $\Jdrccp_N$ and $\{\xdrccp_N\}_{N \in \Nb}$ are the optimal value and an optimizer of~\eqref{eq:def_drccp}, respectively, where $\theta$ is set to $\epsilon_N$. 
\end{theorem}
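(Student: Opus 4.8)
The plan is to mirror the structure of the proof of Theorem~\ref{th:asymp-cvar}, but now working with the probability functions $p(x) := \Pb(F(x,\xi) \le 0)$ and $\widehat p_N(x) := \inf_{\Qb \in \MM_N^{\eps_N}} \Qb(F(x,\xi) \le 0)$, so that the feasibility sets of~\eqref{eq:def_ccp} and of~\eqref{eq:def_drccp} (with $\theta = \eps_N$) are $\GG := \setdef{x \in X}{p(x) \ge 1-\alpha}$ and the random set $\GG_N := \setdef{x \in X}{\widehat p_N(x) \ge 1-\alpha}$. Since $F$ is continuous, $p$ is upper semicontinuous (and, under Assumption~\ref{as:regularity-ccp}, continuous), so $\GG$ is closed. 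First, exactly as in statement~(i) of Theorem~\ref{th:asymp-cvar}, Assumption~\ref{as:fs-guarantee} and the Borel--Cantelli lemma give that $\Pb^\infty$-almost surely $\Pb \in \MM_N^{\eps_N}$ for all sufficiently large $N$; on this event $\widehat p_N(x) \le p(x)$ for every $x$, hence $\GG_N \subseteq \GG$ and therefore $\Jccp \le \Jdrccp_N$ for all large $N$. This proves~(i) and yields the ``outer'' half of the feasible-set convergence for free, which is what will force accumulation points of DRCCP optimizers to be CCP-feasible.

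The crux is the ``inner'' half: I claim that $\Pb^\infty$-almost surely, every $x$ with $p(x) > 1-\alpha$ belongs to $\GG_N$ for all sufficiently large $N$. Work on the (single, $x$-independent) almost-sure event on which, additionally, $\sup_{\Qb \in \MM_N^{\eps_N}} W_1(\Qb,\Pb) \to 0$, which holds by Lemma~\ref{le:peyman_conv_dist}. For $r>0$, the function $\xi \mapsto \max\bigl(0,\, 1 - \dist(\xi, \setdef{\omega}{F(x,\omega) > 0})/r\bigr)$ is $(1/r)$-Lipschitz, dominates $\ones_{\{F(x,\cdot) > 0\}}$, and vanishes outside the open $r$-neighborhood $\{F(x,\cdot) > 0\}^{(r)}$; substituting it into Kantorovich--Rubinstein duality (as in the proof of Lemma~\ref{le:unif-conv-v}) gives, uniformly over $\Qb \in \MM_N^{\eps_N}$,
\begin{equation*}
\widehat p_N(x) \;\ge\; 1 - \Pb\bigl(\{F(x,\cdot) > 0\}^{(r)}\bigr) - \tfrac{1}{r}\,\sup_{\Qb \in \MM_N^{\eps_N}} W_1(\Qb,\Pb).
\end{equation*}
Letting $N \to \infty$ and then $r \downarrow 0$ yields $\liminf_{N} \widehat p_N(x) \ge 1 - \Pb\bigl(\cl(\{F(x,\cdot) > 0\})\bigr)$. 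Under Assumption~\ref{as:regularity-ccp}~(i), $\cl(\{F(x,\cdot) > 0\}) \subseteq \{F(x,\cdot) \ge 0\}$ and $\Pb(F(x,\xi) = 0) = 0$, so the right-hand side equals $p(x)$; under Assumption~\ref{as:regularity-ccp}~(ii), writing $H(x) = \{F(x,\cdot) \le 0\}$, one has $\cl(\Xi \setminus H(x)) = \Xi \setminus \intr H(x)$, and $\Pb(\mathrm{bd}\, H(x)) = 0$ gives $\Pb(\intr H(x)) = \Pb(H(x)) = p(x)$, so again the bound equals $p(x)$ (convexity and continuity of $H(\cdot)$ being used, as in~\cite{guo2017convergence}, for the continuity of $p$). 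Combined with $\widehat p_N(x) \le p(x)$ from the first paragraph, this shows $\widehat p_N(x) \to p(x)$, so $p(x) > 1-\alpha$ forces $x \in \GG_N$ for all large $N$, proving the claim; in particular~\eqref{eq:def_drccp} is feasible for all large $N$.

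Statements~(ii) and~(iii) then follow by standard stability arguments as in~\cite{guo2017convergence,shapiro2009lectures}. For the upper bound, take the optimizer $x^*$ of~\eqref{eq:def_ccp} lying in $\cl(\setdef{x \in X}{p(x) > 1-\alpha})$ and a sequence $x_k \to x^*$ with $p(x_k) > 1-\alpha$; the claim gives $x_k \in \GG_N$ for all large $N$, hence $\limsup_N \Jdrccp_N \le c^\intercal x_k$, and letting $k \to \infty$, $\limsup_N \Jdrccp_N \le c^\intercal x^* = \Jccp$. For the lower bound and optimizer convergence, any sequence of optimizers $\{\xdrccp_N\}$ lies in the compact set $\YY$; along any subsequence extract a further subsequence $\xdrccp_{N_j} \to \bar x \in \YY$. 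Since $\GG_N \subseteq \GG$ eventually and $\GG$ is closed, $\bar x \in \GG$, so $c^\intercal \bar x \ge \Jccp$; as $c^\intercal \xdrccp_{N_j} = \Jdrccp_{N_j}$, this gives $\liminf_N \Jdrccp_N \ge \Jccp$. Hence $\Jdrccp_N \to \Jccp$, and every accumulation point $\bar x$ satisfies $c^\intercal \bar x = \Jccp$ with $\bar x \in \GG$, i.e., is an optimizer of~\eqref{eq:def_ccp}.

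The main obstacle is the inner-approximation claim in the second paragraph: showing that the worst-case chance probability over the shrinking \emph{random} ambiguity sets does not collapse below $p(x)$. Without a condition ruling out $\Pb$-mass on $\{F(x,\cdot) = 0\}$ one could transport that mass into $\{F(x,\cdot) > 0\}$ at arbitrarily small $W_1$-cost and keep $\widehat p_N(x)$ bounded away from $p(x)$; this is precisely why Assumption~\ref{as:regularity-ccp} is indispensable, and its two alternative forms~(i) and~(ii) must be dispatched by separate topological arguments as in~\cite{guo2017convergence}. The randomness of the ambiguity sets is absorbed cleanly through Lemma~\ref{le:peyman_conv_dist}, which keeps the relevant almost-sure event independent of $x$.
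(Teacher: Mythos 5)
Your proof is correct, but it takes a genuinely different and more self-contained route than the paper. The paper's proof is essentially a reduction to \cite{guo2017convergence}: it invokes Theorem 4.9 there for continuity of $x \mapsto \Pb(F(x,\xi)\le 0)$ under Assumption~\ref{as:regularity-ccp}, Propositions 5.2--5.3 and Theorem 3.2 there (together with Lemma~\ref{le:peyman_conv_dist}) for \emph{uniform} convergence of the worst-case probability over the compact set, and Theorem 3.4 there for the final stability conclusion. You instead prove everything directly: you obtain statement (i) and the outer inclusion $\GG_N \subseteq \GG$ from a single Borel--Cantelli event on which $\Pb \in \MM_N^{\eps_N}$ eventually (which is cleaner than the paper's pointwise-in-$x$ phrasing); you establish only \emph{pointwise} convergence $\widehat p_N(x) \to p(x)$ via the Lipschitz mollification of the indicator and Kantorovich--Rubinstein duality, with Assumption~\ref{as:regularity-ccp} entering exactly to identify $\Pb(\cl\{F(x,\cdot)>0\})$ with $\Pb(F(x,\cdot)>0)$ in both cases; and you close with an elementary compactness/closedness argument using the assumption $x^* \in \cl\setdef{x}{\Pb(F(x,\xi)\le 0) > 1-\alpha}$. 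What the paper's route buys is the stronger intermediate fact of uniform convergence and continuity of the probability function, at the cost of importing external results whose hypotheses (notably non-random ambiguity sets) must be matched to the present setting; your route buys transparency and weaker intermediate requirements --- you never actually need uniform convergence, uniform continuity of $F$, or even continuity of $p$ (your parenthetical remark about using convexity and continuity of $H(\cdot)$ for continuity of $p$ is vestigial: upper semicontinuity of $p$, which follows from continuity of $F$ alone, already gives closedness of $\GG$, which is all your argument uses). Your closing observation about why Assumption~\ref{as:regularity-ccp} is indispensable --- mass on $\{F(x,\cdot)=0\}$ can be transported into the violation region at vanishing Wasserstein cost --- is exactly the right intuition and is not made explicit in the paper.
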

\begin{proof}
	By assumption, without loss of generality, one can assume that $X = \YY$ is a compact set. Define 
	\begin{align}
		\vccp(x) & := \Pb (F(x,\xi) \le 0), \label{eq:vccp_def}
		\\ 
		\vdrccp(x) & := \inf_{\Qb \in \MM_N^{\eps_N}} \Qb(F(x,\xi) \le 0), 	\label{eq:vdrccp_def}
	\end{align}
	where $\{\eps_N\}_{N\in\naturalnumbers} \subset (0,\infty)$ is any sequence satisfying the hypotheses. Using Assumption~\ref{as:fs-guarantee} and following a similar reasoning as presented in the proof of Lemma~\ref{le:unif-conv-v}, we conclude that for any $x \in X$,
	\begin{align*}
		\Pb^\infty \Bigl(\vdrccp(x) \le \vccp(x) \text{ for all sufficiently large $N$} \Bigr) = 1.
	\end{align*}
	Consequently, $\Jccp \le \Jdrccp_N$ for all sufficiently large $N$. Regarding the convergence statements, note that from~\cite[Theorem 4.9]{guo2017convergence}, Assumption~\ref{as:regularity-ccp} implies continuity of $\vccp$. Further,  from Lemma~\ref{le:peyman_conv_dist}, we deduce that $\MM_N^{\eps_N}$ converges weakly to $\Pb$ almost surely. That is, almost surely, any sequence $\setr{\Pb_N \in \MM_N^{\eps_N}}$ converges weakly to $\Pb$. Thus, from~\cite[Proposition 5.2, 5.3 and Theorem 3.2]{guo2017convergence}, we obtain 	
		$\Pb^\infty \Bigl( \lim_{N \to \infty} \sup_{x \in X} |\vdrccp(x) - \vccp(x)| = 0 \Bigr) = 1$. 
	The proof concludes by applying~\cite[Theorem 3.4]{guo2017convergence}. 
\end{proof}

\section{Conclusion}\label{sec:conclusions}
We have studied the asymptotic consistency of data-driven distributionally robust risk- (captured by the CVaR) and chance-constrained optimization under Wasserstein ambiguity sets. As a consequence, under suitable assumptions on the problem data, the distributionally robust versions of the problems can be used as ``robust approximators'' of the original problems. In future, we plan to analyze the rate of convergence of the consistency arguments. Particularly, we wish to 
obtain confidence intervals for original optimizers of the problems using the solutions of the distributionally robust counterparts.

\section*{Appendix}
\renewcommand{\theequation}{A.\arabic{equation}}
\renewcommand{\thetheorem}{A.\arabic{theorem}}

The following result aids us in proving Lemma \ref{lemma:drccp_minmax}.

\begin{theorem}\longthmtitle{Stochastic min-max equality \cite{shapiro2002minimax}}\label{thm:min-max-shapiro}
	Let $\MM$ be a nonempty and weakly compact set of probability measures on $(\Xi,\mathcal{B}(\Xi))$.
	Consider a function $g:\Rb^n \times \Xi \to \Rb$. Let $T \subseteq \Rb^n$ be a closed convex set. Assume that there exists a convex neighborhood $V$ of $T$ such that for all $t \in V$, the function $g(t,\cdot)$ is measurable, integrable with respect to all $\Pb \in \MM$, and $\sup_{\Pb \in \MM} \Eb_{\Pb} [g(t,\xi)] < \infty$. Further assume that $g(\cdot,\xi)$ is convex on $V$ for all $\xi \in \Xi$. Let $\bar{t} \in \argmin_{t \in T} \sup_{\Pb \in \mathcal{M}} \mathbb{E}_{\Pb}[g(t,\xi)]$. Assume that for every $t$ in a neighborhood of $\bar{t}$, the function $g(t,\cdot)$ is bounded and upper semicontinuous on $\Xi$ and the function $g(\bar{t},\cdot)$ is bounded and continuous on $\Xi$. Then,
	\begin{equation*}
	\inf_{t \in T} \sup_{\Pb \in \MM} \Eb_{\Pb}[g(t,\xi)] = \sup_{\Pb \in \MM} \inf_{t \in T} \Eb_{\Pb}[g(t,\xi)].
	\end{equation*}
\end{theorem}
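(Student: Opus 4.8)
The plan is to invoke the stochastic min-max theorem, Theorem~\ref{thm:min-max-shapiro}, after fixing $x \in X$ and matching the ingredients. I would take the minimization variable to be $t$ (so that $n=1$ and $T = \Rb$ in the notation of Theorem~\ref{thm:min-max-shapiro}), choose the neighborhood $V = \Rb$, set $\MM = \MM^\theta_N$, and define $g(t,\xi) := (F(x,\xi)+t)_+ - t\alpha$. With these identifications the conclusion of Theorem~\ref{thm:min-max-shapiro} reads exactly as~\eqref{eq:min-max-equality}, so it suffices to verify its hypotheses for each fixed $x \in X$.

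The structural hypotheses follow directly from the setup. Weak compactness of $\MM^\theta_N$ is the property of the Wasserstein ambiguity set recorded in Section~\ref{subsec:wasserstein} (from~\cite{pichler2017quantitative}). The convexity required by Theorem~\ref{thm:min-max-shapiro} is convexity of $g(\cdot,\xi)$ in the minimization variable $t$, which holds automatically: $t \mapsto (F(x,\xi)+t)_+$ is the composition of the convex map $(\cdot)_+$ with the affine map $t \mapsto F(x,\xi)+t$, and $-t\alpha$ is linear. For the measurability, integrability, and regularity-in-$\xi$ conditions I would use Assumption~\ref{ass:main1}~\ref{ass:m-1} and~\ref{ass:m-3}: continuity of $F$ makes $\xi \mapsto g(t,\xi)$ continuous (hence Borel measurable, upper semicontinuous, and continuous) for every $t$, while boundedness of $F(x,\cdot)$ over $\Xi$ makes $g(t,\cdot)$ bounded, so it is integrable against every $\Qb \in \MM^\theta_N$ and $\sup_{\Qb \in \MM^\theta_N} \Eb_\Qb[g(t,\xi)] < \infty$. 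Since these properties hold for all $t \in \Rb$, the local boundedness and continuity conditions near any candidate minimizer $\bar t$ are satisfied as soon as such a $\bar t$ exists.

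The one hypothesis demanding a genuine argument, and the step I expect to be the main obstacle, is the existence of a minimizer $\bar t \in \argmin_{t \in \Rb}\, \sup_{\Qb \in \MM^\theta_N} \Eb_\Qb[g(t,\xi)]$. Writing $\phi(t) := \sup_{\Qb \in \MM^\theta_N} \Eb_\Qb[g(t,\xi)]$, I would first note that $\phi$ is finite-valued (by boundedness of $g(t,\cdot)$) and convex, being a supremum of the convex-in-$t$ maps $t \mapsto \Eb_\Qb[g(t,\xi)]$, hence continuous on $\Rb$. The crux is coercivity, which is where boundedness of $F(x,\cdot)$ is used in an essential, uniform way. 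Letting $M_x$ bound $\abs{F(x,\cdot)}$ on $\Xi$, for $t > M_x$ we have $(F(x,\xi)+t)_+ = F(x,\xi)+t$ for every $\xi$, so $\phi(t) \ge -M_x + (1-\alpha)t \to +\infty$ using $\alpha < 1$; and for $t < -M_x$ we have $(F(x,\xi)+t)_+ = 0$ for every $\xi$, so $\phi(t) = -\alpha t \to +\infty$ using $\alpha > 0$. A finite, convex, coercive function on $\Rb$ attains its infimum, which supplies the required $\bar t$.

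With every hypothesis in place, Theorem~\ref{thm:min-max-shapiro} yields the equality $\inf_{t \in \Rb} \sup_{\Qb \in \MM^\theta_N} \Eb_\Qb[g(t,\xi)] = \sup_{\Qb \in \MM^\theta_N} \inf_{t \in \Rb} \Eb_\Qb[g(t,\xi)]$, which is precisely~\eqref{eq:min-max-equality}. Since $x \in X$ was arbitrary, the claim holds for every $x$.
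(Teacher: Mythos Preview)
Your write-up does not prove Theorem~\ref{thm:min-max-shapiro} itself---that result is merely quoted from~\cite{shapiro2002minimax} and carries no proof in the paper---but is instead a proof of Lemma~\ref{lemma:drccp_minmax}, which \emph{applies} Theorem~\ref{thm:min-max-shapiro}. Viewed as a proof of Lemma~\ref{lemma:drccp_minmax}, your argument is correct and mirrors the paper's own proof essentially step for step: the same identifications ($T=V=\Rb$, $\MM=\MM^\theta_N$, $g(t,\xi)=(F(x,\xi)+t)_+-t\alpha$), the same hypothesis checks via weak compactness of $\MM^\theta_N$ and Assumption~\ref{ass:main1}~\ref{ass:m-1},~\ref{ass:m-3}, and the same recognition that the only substantive point is attainment of $\bar t$, handled by coercivity of the sup-function (your case split $t>M_x$ versus $t<-M_x$ just makes explicit what the paper states in one line).
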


\medskip

\noindent {\bf Proof of Lemma \ref{lemma:drccp_minmax}:} We suppress the variable $x$ in the proof for better readability.  We verify that the hypotheses of the min-max theorem (Theorem \ref{thm:min-max-shapiro}) hold.
	
	Drawing the parallelism in notation between our case and Theorem~\ref{thm:min-max-shapiro}, note that here $\Rb$ plays the role of both $T$ and $V$; $\MM^\theta_N$ that of $\MM$; and the function $g$ is $g(t,\xi):=(F(\xi)+t)_+ - t\alpha$. Recall that $\MM^\theta_N$ is weakly compact.

	Note that $g$ is continuous as $F$ is so. Further since $F$ is bounded, for every $t \in \Rb$, the function $\xi \mapsto g(t,\xi)$ is bounded and $\sup_{\Qb \in \MM^\theta_N} \Eb_{\Qb} [g(t,\xi)] < \infty$. Finally, for every $\xi \in \Xi$, $t \mapsto g(t,\xi)$ is convex. Thus, to conclude the proof it remains to show that the infimum on the right-hand side of~\eqref{eq:min-max-equality} is attained. To this end, define the function
	$h(t):= \underset{\Qb \in \MM^\theta_N}{\sup} \Eb_{\Qb} [(F(\xi)+t)_+ -t\alpha]$.
Now, for any $\Qb \in \MM^\theta_N$, the function $t \mapsto \Eb_{\Qb} [(F(\xi)+t)_+ - t \alpha]$ is convex and real-valued. Since $h$ is supremum over a family of such functions, $h$ is convex, lower semicontinuous~\cite[Proposition 2.1.2]{JBHU-CL:04}. Further, for any $\xi$, $(F(\xi)+t)_+ - t \alpha \to \infty$ as $\abs{t} \to \infty$. This fact along with boundedness of $F$ implies $h(t) \to \infty$ as $\abs{t} \to \infty$. Thus, $\inf_{t \in \Rb} h(t)$ exists. \hfill $\blacksquare$

\end{document}